\begin{document}

	\title[Existence of Multiple Solutions ]{Multiple solutions for a weighted $\MakeLowercase{p}$-Laplacian problem}

	\author[R. Kumar \& A. Sarkar]{Rohit Kumar,  Abhishek Sarkar}
		\address{The Department of Mathematics,  IIT Jodhpur, Jodhpur, Rajasthan 342030, India}
	\email{kumar174@iitj.ac.in, abhisheks@iitj.ac.in}

	\subjclass[2010]{35B38, 35J62, 35J92}
	
	\keywords{Weighted $p$-Laplacian; weighted Sobolev space; critical points.  }

	\begin{abstract}
		We prove the existence of at least three solutions for a weighted $p$-Laplacian operator involving Dirichlet boundary condition in a weighted Sobolev space.  The main tool we use here is a three solution theorem in reflexive Banach spaces due to G. Bonanno and B. Ricceri.
	\end{abstract}

\maketitle
\numberwithin{equation}{section}
\newtheorem{theorem}{Theorem}[section]
\newtheorem{lemma}[theorem]{Lemma}
\newtheorem{proposition}[theorem]{Proposition}
\newtheorem{corollary}[theorem]{Corollary}
\newtheorem{definition}[theorem]{Definition}
\newtheorem{example}[theorem]{Example}
\newtheorem{remark}[theorem]{Remark}
\allowdisplaybreaks
\section{Introduction}
In this article we are interested in proving existence of three solutions for a Dirichlet boundary value problem involving weighted $p$-Laplacian operator. We consider the following problem: 
\begin{align}\label{main}
\begin{cases}
-\mathrm{div}(a(x)|\nabla u|^{p-2}\nabla u)  +|u|^{p-2}u = \lambda f(x,u) +\mu g(x,u) \ \ \text{ in } \Omega,\\
\qquad u=0 \text{ on } \partial \Omega,
\end{cases}
\end{align}
where $\Omega \subset \mathbb{R}^N$ is a bounded domain, $p>1$ and $N \geq 1$. The restriction between $p$ and $N$ will be specified as we proceed. 

We assume that the weight $a$ satisfies the following conditions
\begin{align} \label{weight}
\begin{cases}
a \text{ is positive a.e. in } \Omega,\\
a^{-1/(p-1)} \in L^1_{\mathrm{loc}}(\Omega),\\
a \in L^1_{\mathrm{loc}}(\Omega); \ a^{-s} \in L^1(\Omega) \text{ with some } s>0.
\end{cases}
\end{align}
We look for solutions in the weighted Sobolev space $W^{1,p}_0(a;\Omega)$ associated with the weight $a(x)$, which is defined in Section~\ref{Sec2}.

The weighted operator was first introduced by Murthy-Stampacchia \cite{Murthy} for the second order linear pdes. Later it was generalized to higher order linear pdes and also quasilinear elliptic pdes. For interested reader we refer to the book by Dr\'abek et al. \cite{Drabek} and the research article \cite{Le}, where boundary value problems for weighted $p$-Laplacian operators have been studied independently.    
Our aim is to show the existence of at least three solutions of problem \eqref{main}, by using a \emph{three critical points theorem} introduced by Riccieri and also by Bonanno in their series of articles. First we state the theorem proved by Riccieri \cite{Riccieri}.
\begin{theorem}
	Let $X$ be a separable and reflexive real Banach space; $I \subset \mathbb{R}$ an interval; $\phi:X \to \mathbb{R}$ a sequentially weakly lower semicontinuous $C^1$ functional whose derivative admits a continuous inverse on $X^{\ast}$; $J:X \to \mathbb{R}$ a $C^1$ functional with compact derivative. Assume that
	\[ \lim \limits_{\|u\| \to \infty} (\phi(u)+\lambda J(u)) = +\infty,\]
	for all $\lambda \in I$, and that there exists $\rho \in \mathbb{R}$ such that 
	\[ \sup \limits_{\lambda \in I} \inf_{u \in X} (\phi(u) + \lambda (J(u)+\rho)) < \inf \limits_{u \in X} \sup_{\lambda \in I} (\phi(u) +\lambda(J(u)+\rho)).\]
	Then, there exist a non-empty open set $\Gamma \subset I$ and a positive real number $r$ such that, for each $\lambda \in \Gamma$, the equation
	\[\phi'(u)+\lambda J'(u)=0,\]
	has at least three solutions in $X$ whose norms are less than $r$.
\end{theorem}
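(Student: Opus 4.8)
The plan is to read the two data functionals as the constituents of a single parameter-dependent energy and then to combine an abstract minimax principle with a three-critical-points argument of Pucci--Serrin type. Throughout put
\[
h(u,\lambda) = \phi(u) + \lambda\bigl(J(u)+\rho\bigr), \qquad E_\lambda = \phi + \lambda J,
\]
and note that $h(\cdot,\lambda)$ and $E_\lambda$ differ only by the additive constant $\lambda\rho$, so they share the same critical points. For fixed $u$ the map $\lambda \mapsto h(u,\lambda)$ is affine, hence continuous and concave on the interval $I$; for fixed $\lambda$ the map $u \mapsto h(u,\lambda)$ is sequentially weakly lower semicontinuous, since $\phi$ is so by hypothesis and $J$, having a compact derivative, is sequentially weakly continuous. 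The coercivity hypothesis $\lim_{\|u\|\to\infty} h(u,\lambda) = +\infty$ together with the reflexivity of $X$ and the Weierstrass theorem then guarantees that $h(\cdot,\lambda)$ attains a global minimum for every $\lambda \in I$.

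First I would verify that $E_\lambda$ satisfies the Palais--Smale condition for each $\lambda \in I$. If $(u_n) \subset X$ satisfies $E_\lambda(u_n)$ bounded and $E_\lambda'(u_n) \to 0$ in $X^*$, then coercivity forces $(u_n)$ to be bounded, so by reflexivity a subsequence satisfies $u_n \rightharpoonup u$. Compactness of $J'$ gives $J'(u_n) \to J'(u)$ strongly in $X^*$, whence $\phi'(u_n) = E_\lambda'(u_n) - \lambda J'(u_n) \to -\lambda J'(u)$ strongly. Since $\phi'$ admits a continuous inverse on $X^*$, applying $(\phi')^{-1}$ shows that $u_n$ converges strongly, which is exactly (PS).

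Next I would invoke Ricceri's general variational principle. The concavity in $\lambda$, the weak lower semicontinuity and coercivity in $u$, and above all the strict gap
\[
\sup_{\lambda \in I}\inf_{u \in X} h(u,\lambda) < \inf_{u \in X}\sup_{\lambda \in I} h(u,\lambda)
\]
are precisely the hypotheses under which the principle produces a non-empty open subinterval $\Gamma \subset I$ and a radius $r > 0$ such that, for every $\lambda \in \Gamma$, the energy $E_\lambda$ admits two distinct local minima contained in the open ball $B_r$. The underlying mechanism is that the concave upper-semicontinuous function $\lambda \mapsto \inf_u h(u,\lambda)$ cannot be reconciled with the value $\inf_u \sup_\lambda h$ unless, on a whole subinterval, the parametrized minimizer fails to be unique, so that a second (local, generally non-global) minimizer appears. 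Converting this qualitative non-uniqueness into an honest pair of local minima confined to a single ball $B_r$ valid across the subinterval is the delicate heart of the argument, and the step I expect to be the main obstacle.

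Finally, for each $\lambda \in \Gamma$ I would produce the third solution from the two local minima via the Pucci--Serrin theorem: a $C^1$ functional satisfying (PS) that possesses two distinct local minima has a third critical point, necessarily distinct from the two minima. Because coercivity makes every sublevel set of $E_\lambda$ bounded, and the min-max level defining the third point is controlled by the energies along admissible paths joining the two minima, this third critical point also lies in a bounded set; shrinking $\Gamma$ and enlarging $r$ if necessary keeps all three critical points inside $B_r$. Reading each critical point of $E_\lambda$ as a solution of $\phi'(u) + \lambda J'(u) = 0$ then yields, for every $\lambda \in \Gamma$, at least three solutions of norm less than $r$, which is the assertion.
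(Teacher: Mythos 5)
First, a caveat about the comparison itself: the paper does not prove this statement at all --- it is quoted verbatim from Ricceri \cite{Riccieri} as a known result (the paper then actually works with the Bonanno--Ricceri variant, Theorem \ref{threecritical}), so there is no internal proof to measure your attempt against; it has to be judged on its own merits and against Ricceri's published argument.

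Your outline reproduces the architecture of that argument correctly: sequential weak continuity of $J$ from compactness of $J'$; existence of global minimizers of $h(\cdot,\lambda)$ from coercivity, reflexivity, and weak lower semicontinuity; the Palais--Smale condition for $E_\lambda$ via compactness of $J'$ and the continuous inverse of $\phi'$ (one small repair here: from $u_n \rightharpoonup u$ you may not conclude $J'(u_n) \to J'(u)$ strongly unless $J'$ is completely continuous, which for a nonlinear map does not follow from compactness alone --- but compactness does let you extract a subsequence with $J'(u_{n_k}) \to w$ strongly, whence $\phi'(u_{n_k}) \to -\lambda w$ and $u_{n_k} = (\phi')^{-1}(\phi'(u_{n_k}))$ converges, which is all (PS) needs); and finally the Pucci--Serrin step from two local minima to a third critical point, with the norm bound coming from coercivity. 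The genuine gap is exactly where you flag it. The passage from the strict minimax inequality $\sup_\lambda \inf_u h < \inf_u \sup_\lambda h$ to an open interval $\Gamma$ and a \emph{single} ball $B_r$ containing \emph{two} local minima of $E_\lambda$ for \emph{every} $\lambda \in \Gamma$ is not a black-box consequence of ``Ricceri's general variational principle'' as that principle is usually stated: the variational principle concerns sublevel-restricted minimization of a single functional and yields local minima at individual parameter values, not a two-minima conclusion uniform over a whole subinterval with a common radius. In Ricceri's actual proof this step rests on a separate topological minimax theorem, exploiting the concavity and upper semicontinuity of $\lambda \mapsto \inf_u h(u,\lambda)$ together with a connectedness argument, and it constitutes the real content of the theorem; invoking a principle stated at precisely the strength of the desired conclusion makes the proposal circular at its core. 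In short: a correct roadmap that matches the known proof in outline, but the decisive step is assumed rather than proved.
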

\par We note that the first result appeared in the literature due to Ricceri \cite{Ricci1st}, having made assumptions that the space is reflexive and separable. Later, Bonanno \cite{Bon31} gave an equivalent conditions to Ricceri's theorem. But Ricceri \cite{Ricci2nd} then generalized his result only for reflexive Banach spaces (with some compensation). Here we state the equivalent theorem combining \cite{Bon31, Ricci2nd}.
\begin{theorem}\label{threecritical}
	Let $X$ be a reflexive Banach space;  $\phi:X \to \mathbb{R}$ a continuously G\^ateaux differentiable and sequentially weakly lower semicontinuous $C^1$ functional, bounded on each bounded subset of $X$, whose G\^ateaux derivative admits a continuous inverse on $X^*$; $\Phi:X \to \mathbb{R}$ a $C^1$  functional with compact G\^ateaux derivative. Assume that 
	\begin{itemize}
		\item[(i)] $\lim \limits_{\|u\| \to \infty} (\phi(u) + \lambda \Phi(u)) = +\infty$;
		\item[(ii)] there exist $r \in \mathbb{R}$ and $u_0, u_1 \in X$ such that $\phi(u_0) <r<\phi(u_1)$; 
		\item[(iii)] $\displaystyle \inf \limits_{u \in \phi^{-1}((-\infty,r])} \Phi(u) > \frac{(\phi(u_1)-r)\Phi(u_0)+(r-\phi(u_0))\Phi(u_1)}{\phi(u_1)-\phi(u_0)}.$ 
	\end{itemize}
	Then, there exists a non-empty open set $\Gamma \subset [0,\infty)$ and a positive real number $\rho$ with the following property: for each $\lambda \in \Gamma$ and every $C^1$ functional  $J:X \to \mathbb{R}$ with compact G\^ateaux derivative, there exists $\delta >0$ such that for each  $\mu \in [0,\delta]$, the equation
	$$\phi'(u)+\lambda \Phi'(u)+\mu J'(u)=0$$
	has at least three solutions in $X$, whose norms are less than $\rho.$
\end{theorem}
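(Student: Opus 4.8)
The plan is to deduce the statement from the abstract three–critical–points principle of Ricceri \cite{Ricci2nd} (equivalently \cite{Bon31}), applied with the interval $I=[0,+\infty)$ to the pair $(\phi,\Phi)$, where $\Phi$ here plays the role of the compact–derivative functional $J$ in the first theorem above. Once the hypotheses of that principle are in force, the open set $\Gamma\subset[0,+\infty)$, the uniform norm bound $\rho$, and in particular the stability of the three–solution conclusion under an additional small compact perturbation $\mu J'$, are exactly what the reflexive–space version \cite{Ricci2nd} delivers. The growth condition $\lim_{\|u\|\to\infty}(\phi(u)+\lambda\Phi(u))=+\infty$ required by the principle is precisely assumption (i). Hence the only thing left to verify — and the crux of the argument — is the strict minimax inequality
\[
\sup_{\lambda\ge 0}\inf_{u\in X}\bigl(\phi(u)+\lambda(\Phi(u)+\rho)\bigr)<\inf_{u\in X}\sup_{\lambda\ge 0}\bigl(\phi(u)+\lambda(\Phi(u)+\rho)\bigr)
\]
for a suitable $\rho\in\mathbb{R}$.

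To produce such a $\rho$, write $\psi_\lambda(u)=\phi(u)+\lambda(\Phi(u)+\rho)$ and evaluate the two sides separately. Since $\sup_{\lambda\ge0}\lambda(\Phi(u)+\rho)$ is $0$ when $\Phi(u)+\rho\le 0$ and $+\infty$ otherwise, the right–hand side equals $\inf\{\phi(u):\Phi(u)\le-\rho\}$. Set $m:=\inf_{u\in\phi^{-1}((-\infty,r])}\Phi(u)$ and let $W$ denote the quantity on the right of (iii); assumption (iii) reads $m>W$, so the interval $(W,m)$ is nonempty and I may fix $\rho$ with $-\rho\in(W,m)$. Because $-\rho<m$, any $u$ with $\Phi(u)\le-\rho$ cannot lie in $\phi^{-1}((-\infty,r])$ (on that sublevel set $\Phi\ge m>-\rho$), so $\phi(u)>r$; consequently the right–hand side is $\ge r$.

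For the left–hand side I would bound $\inf_u\psi_\lambda(u)$ from above by testing on the two points from (ii), which gives $\sup_{\lambda\ge0}\inf_u\psi_\lambda\le\sup_{\lambda\ge0}\min_{i\in\{0,1\}}\bigl(\phi(u_i)+\lambda(\Phi(u_i)+\rho)\bigr)$, a supremum of a concave (minimum of two affine) function of $\lambda$. A short case analysis on the signs of the slopes $\Phi(u_i)+\rho$ then finishes it: if $\Phi(u_0)+\rho\le0$ the minimum never exceeds $\phi(u_0)<r$; otherwise $\Phi(u_0)+\rho>0$, which — since $-\rho>W\ge\min\{\Phi(u_0),\Phi(u_1)\}$ — forces $\Phi(u_1)+\rho<0$, so the concave function attains its maximum at the crossing abscissa $\lambda^\ast=(\phi(u_1)-\phi(u_0))/(\Phi(u_0)-\Phi(u_1))$, where a direct computation shows the common value equals $r$ exactly when $-\rho=W$ and is $<r$ when $-\rho>W$. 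In every case the left–hand side is $<r\le$ (right–hand side), so the minimax inequality is strict. With (i), (ii), (iii) thus translated into the hypotheses of the abstract principle, the conclusion follows. I expect the genuinely hard content to be not this algebraic verification but the perturbation–stability step — the passage from $\phi'+\lambda\Phi'=0$ to $\phi'+\lambda\Phi'+\mu J'=0$ for all small $\mu$ with the uniform bound $\rho$ — which I would not reprove here but import from \cite{Ricci2nd,Bon31}.
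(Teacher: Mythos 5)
Your proposal is correct and follows exactly the route the paper intends: the paper does not prove Theorem~\ref{threecritical} at all, but states it as the combination of Bonanno's equivalence \cite{Bon31} with Ricceri's revisited theorem \cite{Ricci2nd}, and your verification that (ii)--(iii) force the strict minimax inequality for any $-\rho\in(W,m)$ --- the sign case analysis on $\Phi(u_i)+\rho$ and the crossing-value computation giving $v<r$ iff $-\rho>W$ --- is precisely Bonanno's bridging argument, and it checks out. Importing the perturbation-stability step and the uniform bound $\rho$ from \cite{Ricci2nd} rather than reproving them is legitimate, since that is exactly the content of the cited reflexive-space version.
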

As an application of aforementioned theorems we refer to \cite{AH, Livrea} for Dirichlet boundary value problems and for Neumann boundary value problems we refer to \cite{Anello, Candito} and the references therein. We follow the similar path to \cite{Livrea}. 

\par The rest of this paper is organized as follow. In Section~\ref{Sec2}, we briefly discuss the weighted Sobolev spaces and state the main theorem. Section~\ref{Results} deals with the proof of main theorem and also some necessary lemmas.
\section{Preliminaries and Result}\label{Sec2}

We briefly discuss the weighted Sobolev spaces in a way the approach had been done in \cite{Drabek}. Given $a$ satisfying \eqref{weight}, the weighted Sobolev space $W^{1,p}(a;\Omega)$ is defined to be the set of all real valued measurable functions $u$ for which 
\begin{equation}\label{norm1}
\|u\| := \bigg(\int_{\Omega} |u|^p \mathrm{d} x + \int_{\Omega} a(x)|\nabla u|^p \mathrm{d} x\bigg)^{1/p} <\infty.
\end{equation}
Since $a^{-1/(p-1)} \in L^1_{\mathrm{loc}}(\Omega)$ (see \eqref{weight}), it follows that $W^{1,p}(a;\Omega)$ equipped with the norm $\|\cdot\|$ is uniformly convex Banach space; thus by Milman--Pettis theorem it is a reflexive Banach space. The assumption $a \in L^1_{\mathrm{loc}}(\Omega)$ (see \eqref{weight}) ensures 
\[ C_0^{\infty}(\Omega) \subset W^{1,p}(a;\Omega),\] which allows us to consider the closure of $C_0^{\infty}(\Omega)$ with respect to the norm $\| \cdot \|$, and denote it by $W^{1,p}_0(a;\Omega)$. 
Moreover, the continuous embedding holds
\begin{equation}\label{contemb}
W^{1,p}_0(a;\Omega) \hookrightarrow W^{1,p_s}_0(\Omega), \ \text{ where } p_s:= \frac{ps}{s+1}. 
\end{equation}
Note that, $p >p_s$. When $p_s >N$, from the classical Sobolev embedding theorem we have the following compact embedding:
\begin{equation}\label{embed} W^{1,p}_0(a;\Omega)  \hookrightarrow W^{1,p_s}_0(\Omega) \hookrightarrow \hookrightarrow C^{0,\alpha}(\bar{\Omega}),\end{equation}
for all $0 < \alpha <1-(N/p_s).$ Hereafter, it is always assumed that $s>0$ (in \eqref{weight}) is chosen such that \[ \fbox{$p>p_s>N$ \text{ i.e., } $s > N/(p-N)$.} \]
\begin{remark} \label{eqnorm} It is worth mentioning that by recalling a version of Friedrichs type inequality associated with some weight (see \cite[eq. no (1.28), p.27]{Drabek}) the norm 
	\[\|u\|_a:= \bigg( \int_{\Omega} a(x) |\nabla u|^p \mathrm{d}x \bigg)^{1/p} ,\]
	on the space $W^{1,p}_0(a;\Omega)$ is equivalent to the norm $\|\cdot\|$ defined in \eqref{norm1}.\end{remark}
\begin{example}A typical example of weight \eqref{weight} can be considered as 
	\[ a(x) := \frac{1}{\mathrm{dist}(x,\partial \Omega)^{l}},\] for $l \geq 0$; where '$\mathrm{dist}$' denotes the distance function from a point $x$ in $\Omega$ to the boundary $\partial \Omega$.  
\end{example}
\par From the above embedding \eqref{embed}, we have
\begin{equation}\label{k-bound}
k:= \sup_{u \in W^{1,p}_0(a;\Omega) \setminus\{0\}} \mathrm{\frac{max_{\bar{\Omega}} |u(x)|}{\|u\|} } < \infty. \end{equation}
\begin{remark}
	Note that, we can talk about an upper bound for above $k$. Using the above embedding \eqref{contemb} and \cite{talenti} it follows that 
	\[ k \leq \frac{N^{-1/p_s}}{\sqrt{\pi}} \bigg[\Gamma\bigg(1+\frac{N}{2}\bigg)\bigg]^{1/N} \bigg( \frac{p_s-1}{p_s-N}\bigg)^{1-1/p_s} |\Omega|^{1/N-1/p_s}. \]
\end{remark}
\begin{definition}
	A \emph{weak solution} of problem \eqref{main} is such an $u \in W^{1,p}_0(a;\Omega)$ which satisfies 
	\begin{align}\label{weaksol}
	\int_{\Omega} a(x)|\nabla u|^{p-2} \nabla u \cdot \nabla v \mathrm{d} x + \int_{\Omega}|u|^{p-2} u v \mathrm{d} x &= \lambda \int_{\Omega} f(x,u)v \mathrm{d} x  +\mu \int_{\Omega} g(x,u)v \mathrm{d} x,
	\end{align}
	for every $v \in W^{1,p}_0(a;\Omega)$. 
\end{definition}
\par Fix $x_0 \in \Omega$ and choose $r_1, r_2$ with $0<r_1<r_2$ such that $B(x_0,r_1) \subset B(x_0, r_2) \subset \subset \Omega$, where $B(x,r)$ denotes the ball in $\mathbb{R}^N$  centered at $x$ and of radius $r$. Let 
\begin{equation} \xi = \xi(p,r_1,r_2):= \frac{2 k  r_1}{(r_2^2 -r_1^2)} \|a\|_{L^1(A_{r_1}^{r_2})}^{1/p},  \label{lbu} \end{equation} and 
\begin{equation} \eta=\eta(p,N,r_1,r_2) := \bigg( \frac{2^p k^p  r_2^p }{(r_2^2-r_1^2)^p} \|a\|_{L^1(A_{r_1}^{r_2})} + \frac{ k^p d^p}{N}w_N r_2^N +  k^p w_N r_1^N\bigg)^{1/p}, \label{ubu}\end{equation} where $A_{r_1}^{r_2}:= B(x_0,r_2) \setminus B(x_0,r_1).$
We note that $\xi$ and $\eta$ both are finite since $a \in L^1_{\mathrm{loc}}(\Omega)$. 
We also define $$F(x,t):= \int_{0}^{t} f(x,s) \mathrm{d} s \text{ and } G(x,t) := \int_{0}^t g(x,s) \mathrm{d} s.$$
\begin{theorem}[Main Result]\label{result}
	Assume that there exist three positive constants $c,d$ and $\gamma$ with $d^p \xi^p >c^p$ and functions $h$ and $w_{\tau}\in L^1(\Omega)$ such that
	\begin{itemize}
		\item[(H1)] $F(x,t) \geq 0$ for each $(x,t) \in \{\bar{\Omega} \setminus B(x_0,r_1)\} \times [0,d]$
		\item[(H2)] $ d^p \eta^p |\Omega| \displaystyle\sup_{(x,t) \in \Omega \times [-c, c]} F(x,t) < c^p \int_{\Omega} F(x,d) \mathrm{d} x$
		\item[(H3)] $F(x,t) < h(x) (1 + |t|^{\gamma}),$ for a.e. $x \in \Omega$ and $t \in \mathbb{R}$ large
		\item[(H4)] $F(x,0) =0$ for a.e $x \in \Omega$
		\item[(H5)] $g:\Omega \times \mathbb{R} \to \mathbb{R}$ be a Carath\'eodory function  such that for all $\tau>0$, there 
		\[\displaystyle \sup_{|t| \leq \tau} |g(\cdot,t)| \leq w_{\tau}(x).\]  
	\end{itemize}
	Then, there exists an open interval $\Lambda \subset [0,\infty)$ and a positive real number $\rho$ with the following property: for each $\lambda \in \Lambda$ 
	there exists $\delta >0$ such that for each $\mu \in [0,\delta]$, problem \eqref{main} has at least three weak solutions in $W^{1,p}_0(a;\Omega)$, whose norms are less than $\rho$. 
\end{theorem}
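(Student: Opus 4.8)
The plan is to recast \eqref{main} as a critical point problem and apply Theorem \ref{threecritical}. On the reflexive space $X:=W^{1,p}_0(a;\Omega)$ I would introduce
\[
\phi(u):=\frac{1}{p}\|u\|^p,\qquad \Phi(u):=-\int_{\Omega}F(x,u)\,\mathrm{d}x,\qquad J(u):=-\int_{\Omega}G(x,u)\,\mathrm{d}x .
\]
Then $\langle\phi'(u),v\rangle=\int_{\Omega}a(x)|\nabla u|^{p-2}\nabla u\cdot\nabla v\,\mathrm{d}x+\int_{\Omega}|u|^{p-2}uv\,\mathrm{d}x$, $\langle\Phi'(u),v\rangle=-\int_{\Omega}f(x,u)v\,\mathrm{d}x$ and $\langle J'(u),v\rangle=-\int_{\Omega}g(x,u)v\,\mathrm{d}x$, so that the operator equation $\phi'(u)+\lambda\Phi'(u)+\mu J'(u)=0$ is exactly the weak formulation \eqref{weaksol}. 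It therefore suffices to check the hypotheses of Theorem \ref{threecritical} for $\phi,\Phi$ and to verify that $J$ is admissible.

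First I would settle the abstract requirements. Being (a power of) a norm on a uniformly convex space, $\phi$ is convex, continuous, hence sequentially weakly lower semicontinuous, of class $C^1$, and bounded on bounded sets; that $\phi'$ has a continuous inverse on $X^*$ I would obtain from the fact that $\phi'$ is strictly monotone, coercive and continuous, so that the Minty--Browder theorem gives a bijection of $X$ onto $X^*$, while the $(S_+)$ property of the weighted $p$-Laplacian part upgrades this to continuity of the inverse. For $\Phi$ and $J$ the decisive ingredient is the compact embedding \eqref{embed}: if $u_n\rightharpoonup u$ in $X$ then $u_n\to u$ in $C^0(\bar{\Omega})$, so the Carath\'eodory property of $f$ (respectively $g$, together with the local bound (H5)) gives $\Phi'(u_n)\to\Phi'(u)$ and $J'(u_n)\to J'(u)$ strongly in $X^*$. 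In particular $J$ is a $C^1$ functional with compact derivative, as needed for the perturbation in Theorem \ref{threecritical}.

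Next I would verify conditions (i)--(iii). For the coercivity (i) I would use (H3): since $\lambda\ge0$ and $\gamma<p$,
\[
\phi(u)+\lambda\Phi(u)\ge\frac{1}{p}\|u\|^p-\lambda\int_{\Omega}h(x)\bigl(1+|u|^\gamma\bigr)\,\mathrm{d}x\ge\frac{1}{p}\|u\|^p-\lambda\|h\|_{L^1(\Omega)}\bigl(1+k^\gamma\|u\|^\gamma\bigr),
\]
where $k$ is the embedding constant \eqref{k-bound}; the leading $\|u\|^p$ term forces the right-hand side to $+\infty$. For (ii) I would take $u_0=0$ and let $u_1$ be the truncated bump equal to $d$ on $B(x_0,r_1)$, equal to $d\,(r_2^2-|x-x_0|^2)/(r_2^2-r_1^2)$ on the annulus $A_{r_1}^{r_2}$ and $0$ outside $B(x_0,r_2)$; choosing the threshold $r:=c^p/(pk^p)$ and bounding $|\nabla u_1|$ from below on the annulus gives $\|u_1\|\ge d\xi/k$, so that the standing assumption $d^p\xi^p>c^p$ yields $\phi(u_0)=0<r<d^p\xi^p/(pk^p)\le\phi(u_1)$. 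For (iii) the choice of $r$ forces every $u$ with $\phi(u)\le r$ to satisfy $\max_{\bar{\Omega}}|u|\le k\|u\|\le c$, whence
\[
\inf_{\phi(u)\le r}\Phi(u)\ge-|\Omega|\sup_{(x,t)\in\Omega\times[-c,c]}F(x,t),
\]
while (H1) and (H4) make the annular and exterior contributions to $\Phi(u_1)=-\int_\Omega F(x,u_1)\,\mathrm{d}x$ drop out favourably, leaving an upper bound for $\Phi(u_1)$ in terms of $\int F(x,d)$ and, through the definition of $\eta$, an upper bound $\|u_1\|^p\le d^p\eta^p/k^p$. Substituting $\phi(u_0)=\Phi(u_0)=0$ collapses the right-hand side of (iii) to $r\,\Phi(u_1)/\phi(u_1)$, and (H2) is precisely the numerical inequality guaranteeing that the displayed infimum strictly exceeds it.

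With (i)--(iii) verified, Theorem \ref{threecritical} furnishes a non-empty open set $\Gamma\subset[0,\infty)$ and $\rho>0$ such that, applied to the admissible perturbation $J$, for every $\lambda\in\Lambda:=\Gamma$ there is $\delta>0$ with the property that for each $\mu\in[0,\delta]$ the equation $\phi'(u)+\lambda\Phi'(u)+\mu J'(u)=0$, that is \eqref{weaksol}, admits at least three solutions of norm less than $\rho$; this is the assertion of Theorem \ref{result}. The step I expect to be most delicate is (iii): one must keep the chain of inequalities correctly oriented and, in particular, derive the explicit two-sided bounds $d\xi/k\le\|u_1\|\le d\eta/k$ for the test function and reconcile them with the constants appearing in (H2) and in the definitions \eqref{lbu}--\eqref{ubu}, paying attention to whether the integral of $F(x,d)$ is taken over $\Omega$ or over the core ball $B(x_0,r_1)$. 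Verifying the continuous invertibility of $\phi'$ rigorously, rather than merely by appeal to monotonicity, is the other point requiring care.
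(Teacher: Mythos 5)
Your proposal follows essentially the same route as the paper's proof: the same three functionals $\phi$, $\Phi$ and $\Upsilon$ (your $J$), the same bump function and threshold $r=\frac{1}{p}(c/k)^p$, the two-sided estimate $d^p\xi^p/k^p<\|u_1\|^p<d^p\eta^p/k^p$ established in the paper's Lemma~\ref{ustar}, and monotone-operator theory for the continuous invertibility of $\phi'$ (the paper derives uniform monotonicity from the Kichenassamy--V\'eron inequality for $p\geq 2$ and \cite{Le} for $1<p<2$, then invokes Zeidler's Theorem 26.A, while your Minty--Browder plus $(S_+)$ argument is an equivalent standard alternative). Your one substantive refinement is making explicit the requirement $\gamma<p$ needed for the coercivity condition (i), which the paper's statement of (H3) leaves implicit.
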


\section{Proof of Main Result}\label{Results}
In this section we prove the main result and necessary lemmas.
We define the following functionals $\phi, \Phi$ and $ \Upsilon$ on  $W^{1,p}_0(a;\Omega)$ by 
\begin{align*}
\phi(u)&:= \frac{1}{p} \int_{\Omega} a(x) |\nabla u|^p \mathrm{d} x + \frac{1}{p} \int_{\Omega} |u|^p \mathrm{d} x = \frac{1}{p} \|u\|^p,\\ 
\Phi(u)&:= - \int_{\Omega} F(x,u) \mathrm{d} x\ \text{ and } 
\Upsilon(u):= -\int_{\Omega} G(x,u) \mathrm{d} x.
\end{align*}
It is worth mentioning that since $p_s>N$ and together with the assumptions on $f$ and $g$, the functionals $\Phi$ and $\Upsilon$ are well defined. 
Then for any $u, v \in W^{1,p}_0(a;\Omega)$, we have 
\begin{align*}
(\phi'(u),v) &= \int_{\Omega} a(x)|\nabla u|^{p-2} \nabla u \cdot \nabla v \mathrm{d} x + \int_{\Omega}|u|^{p-2} u v \mathrm{d}x,\\
(\Phi'(u),v) &= -\int_{\Omega} f(x,u) v \mathrm{d} x \ \text{ and }
(\Upsilon'(u),v) = -\int_{\Omega} g(x,u)v \mathrm{d} x.
\end{align*}

From \eqref{weaksol} it is clear that $u \in W^{1,p}_0(a;\Omega)$ be a weak solution of problem \eqref{main} if for every $v \in W^{1,p}_0(a;\Omega)$ following identity holds
\begin{equation*}
(\phi'(u),v) + \lambda (\Phi'(u),v) + \mu (\Upsilon'(u),v) =0.
\end{equation*}

Thus, we can look for solutions (weak) of problem \eqref{main} applying Bonanno's theorem for three solutions. 

\begin{lemma}[Continuous Inverse]\label{inverse}
	It follows that $(\phi')^{-1} : X^{\ast} \to X$ exists and it is continuous. 
\end{lemma}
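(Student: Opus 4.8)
The plan is to show that $A := \phi'$ is a bounded, continuous, strictly monotone, and coercive operator from $X := W^{1,p}_0(a;\Omega)$ onto $X^{\ast}$, so that the Browder--Minty theorem guarantees $A$ is a bijection and $A^{-1}$ is well defined; the continuity of $A^{-1}$ will then follow from the $(S_{+})$ property of $A$ together with the reflexivity of $X$ established in Section~\ref{Sec2}.

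First I would record the basic structural properties of $A$. Continuity of $A$ and boundedness on bounded sets follow from the growth of $\xi \mapsto |\xi|^{p-2}\xi$ together with H\"older's inequality. Coercivity is immediate, since
\[
(Au,u) = \int_{\Omega} a(x)|\nabla u|^p \,\mathrm{d}x + \int_{\Omega} |u|^p \,\mathrm{d}x = \|u\|^p,
\]
so that $(Au,u)/\|u\| = \|u\|^{p-1} \to +\infty$ as $\|u\| \to \infty$. Strict monotonicity follows from the elementary inequality $(|\xi|^{p-2}\xi - |\eta|^{p-2}\eta)\cdot(\xi-\eta) > 0$ for $\xi \neq \eta$ (in $\mathbb{R}^N$ for the gradient term and in $\mathbb{R}$ for the lower-order term); since $a>0$ a.e., this yields $(Au - Av, u-v) > 0$ whenever $u \neq v$, and in particular $A$ is injective. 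Browder--Minty then supplies surjectivity, so $A^{-1}:X^{\ast}\to X$ exists.

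For the continuity of $A^{-1}$ I would establish that $A$ satisfies the $(S_{+})$ property: if $u_n \rightharpoonup u$ weakly in $X$ and $\limsup_n (Au_n, u_n - u) \le 0$, then $u_n \to u$ strongly. Writing
\[
(Au_n - Au,\, u_n - u) = \int_{\Omega} a(x)\big(|\nabla u_n|^{p-2}\nabla u_n - |\nabla u|^{p-2}\nabla u\big)\cdot(\nabla u_n - \nabla u)\,\mathrm{d}x + \int_{\Omega}\big(|u_n|^{p-2}u_n - |u|^{p-2}u\big)(u_n-u)\,\mathrm{d}x,
\]
both integrands are nonnegative by the monotonicity inequality. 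Since $u_n \rightharpoonup u$ gives $(Au, u_n - u)\to 0$, the hypothesis forces $(Au_n - Au, u_n - u)\to 0$, so each of the two nonnegative integrals tends to $0$. Invoking the sharper well-known inequalities for $\xi \mapsto |\xi|^{p-2}\xi$ --- the lower bound by $c_p|\xi-\eta|^p$ when $p\ge 2$, and the bound by $c_p|\xi-\eta|^2/(|\xi|+|\eta|)^{2-p}$ combined with H\"older's inequality when $1<p<2$ --- one concludes $\int_{\Omega} a(x)|\nabla u_n - \nabla u|^p\,\mathrm{d}x \to 0$ and $\int_{\Omega}|u_n - u|^p\,\mathrm{d}x \to 0$, i.e. $\|u_n - u\| \to 0$. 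Finally, given $f_n \to f$ in $X^{\ast}$, set $u_n := A^{-1}f_n$ and $u := A^{-1}f$; from $\|u_n\|^{p} = (f_n,u_n) \le \|f_n\|_{X^{\ast}}\|u_n\|$ the sequence $\{u_n\}$ is bounded, so by reflexivity a subsequence satisfies $u_{n_k}\rightharpoonup \bar u$. Then $(Au_{n_k}, u_{n_k}-\bar u) = (f_{n_k}, u_{n_k}-\bar u) \to 0$, the $(S_{+})$ property gives $u_{n_k}\to \bar u$ strongly, and continuity of $A$ yields $A\bar u = f$, whence $\bar u = u$ by injectivity; since every subsequence admits a further subsequence converging to the same limit $u$, the whole sequence converges, proving $A^{-1}$ continuous.

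The main obstacle is the $(S_{+})$ property, and within it the regime $1<p<2$: there the gradient term is no longer directly coercive in the $L^p$-sense, and one must combine the degenerate pointwise inequality with a H\"older argument (splitting the domain according to the size of $|\nabla u_n|+|\nabla u|$) to upgrade the vanishing of the monotone integral to strong convergence of the gradients in the weighted $L^p$ norm; the presence of the weight $a$ requires only that this argument be carried out with respect to the measure $a(x)\,\mathrm{d}x$, which is admissible since $a \in L^1_{\mathrm{loc}}(\Omega)$.
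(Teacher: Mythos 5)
Your proposal is correct, and it diverges from the paper's proof in the step that actually matters. The paper verifies the hypotheses of Zeidler's Theorem 26.A directly: uniform monotonicity of $\phi'$ via the Kichenassamy--V\'eron inequality $\langle |x|^{p-2}x-|y|^{p-2}y,\,x-y\rangle \geq 2^{-p}|x-y|^p$ for $p\geq 2$ (deferring $1<p<2$ to Lemma~4 of Le--Schmitt \cite{Le}), plus coercivity from $(\phi'(u),u)=\|u\|^p$ and hemicontinuity of $t\mapsto(\phi'(u+tv),w)$; that theorem then delivers existence \emph{and} continuity of $(\phi')^{-1}$ in one stroke. You instead use Browder--Minty only for bijectivity (strict monotonicity sufficing for injectivity) and extract continuity of the inverse by hand: you prove the $(S_+)$ property from the same pointwise inequalities --- with the H\"older splitting in the degenerate regime $1<p<2$, which is essentially the content of the Le--Schmitt lemma the paper cites --- and then run the standard weak-compactness subsequence argument, using reflexivity (which the paper established via uniform convexity and Milman--Pettis). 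What each route buys: yours is self-contained and avoids asserting uniform monotonicity in norm, which in the simple form $(\phi'(u)-\phi'(v),u-v)\geq c_p\|u-v\|^p$ genuinely fails for $1<p<2$ (there the modulus depends on $\|u\|+\|v\|$, which is why the paper must outsource that case); the paper's route, when it applies, is shorter and quantitatively stronger, since for $p\geq 2$ uniform monotonicity immediately gives the H\"older-type modulus $\|u-v\|^{p-1}\leq c_p^{-1}\|\phi'(u)-\phi'(v)\|_{X^{\ast}}$, whereas $(S_+)$ yields only qualitative continuity. Two small remarks on your write-up: in the final identification $A\bar u=f$ you invoke full continuity of $A$ (a Nemytskii-operator fact in the weighted spaces that you only gesture at); demicontinuity, or hemicontinuity plus Minty's trick, would suffice and is all the paper verifies. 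And your closing comment that the weight requires only working with the measure $a(x)\,\mathrm{d}x$ is exactly right --- positivity of $a$ a.e.\ is what makes the pointwise inequalities integrate correctly, just as in the paper.
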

\begin{proof}
	For any  $x,y \in \mathbb{R}^N$ then by applying the inequality from  \cite{Veron}, 
	there holds
	\[\langle |x|^{p-2} x -|y|^{p-2} y, x-y \rangle \geq \frac{1}{2^p} |x-y|^p, \  p \geq 2, \] for all $x,y \in \mathbb{R}^N$, where $\langle \cdot, \cdot \rangle$ denotes the usual inner product in $\mathbb{R}^N$.
	
	Thus, noting that $a(x)>0$ a.e., we have
	\[(\phi'(u) - \phi'(v), u-v) \geq c_p \|u-v\|^p, \ \ \forall u,v \in W^{1,p}_0(a;\Omega),\] for $p \geq 2$. Hence $\phi'$ is uniformly monotone operator in $W^{1,p}_0(a;\Omega),\ p \geq 2$. Also, for the case $1<p<2$, we can proceed as in \cite[Lemma 4]{Le} and get the desired uniform monotonicity.
		 In addition, a simple computation suggests that $\phi'$ is coercive. Indeed, 
	\[\frac{(\phi'(u),u)}{\|u\|} \geq \frac{\|u\|^p}{\|u\|} = \|u\|^{p-1}. \]
	Also note that the map $t \mapsto (\phi'(u+tv),w)$ is continuous on $[0,1]$ for all $u,v,w \in W^{1,p}_0(a;\Omega)$, hence $\phi'$ is hemicontinuous. Therefore, the conclusion follows immediately by applying Theorem 26.A of \cite{Zeidler}
\end{proof}
Next, we prove another lemma which is essential to prove Theorem \ref{result}. 
\begin{lemma}\label{ustar}
	Assume that there exist two positive constants $c,d$ with $ d^p \xi^p>c^p$ such that 
	\begin{itemize}
		\item[(F1)] $F(x,t) \geq 0$ for each $(x,t) \in \{\Omega \setminus B(x_0,r_1)\} \times [0,d].$
		\item[(F2)] $d^p \eta^p |\Omega| \displaystyle\sup_{\bar{\Omega} \times [-c,c]} F(x,t) < c^p \int_{\Omega} F(x,d) \mathrm{d} x.$
	\end{itemize}
	Then there exist $r>0$ and $u^{\ast} \in W^{1,p}_0(a;\Omega)$, such that 
	\begin{equation}\label{bonanno1}
	\phi(u^{\ast})=\frac{1}{p} \|u^{\ast}\|^p  >r,
	\end{equation}
	and
	\begin{equation} \label{bona1}
	|\Omega| \max_{\bar{\Omega} \times [-c,c]} F(x,t) \leq \bigg(\frac{c}{k \|u^{\ast}\|}\bigg)^{p} \int_{\Omega} F(x,u^{\ast})\mathrm{d} x.
	\end{equation}
\end{lemma}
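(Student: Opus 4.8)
The plan is to exhibit $u^*$ explicitly as the standard truncated ``plateau'' function adapted to the two concentric balls, and to fix the threshold $r$ so that the sublevel set $\phi^{-1}((-\infty,r])$ consists only of functions whose sup-norm does not exceed $c$. Concretely, I would take
\[
u^*(x) := \begin{cases} d, & x \in B(x_0,r_1),\\[2pt] \dfrac{d\,(r_2^2-|x-x_0|^2)}{r_2^2-r_1^2}, & x \in A_{r_1}^{r_2},\\[4pt] 0, & x \in \Omega\setminus B(x_0,r_2),\end{cases}
\qquad r := \frac{1}{p}\Big(\frac{c}{k}\Big)^{p}.
\]
Since $u^*$ is Lipschitz with compact support in $\Omega$ and $a\in L^1_{\mathrm{loc}}(\Omega)$, it lies in $W^{1,p}_0(a;\Omega)$, so $\phi(u^*)$ and $\int_\Omega F(x,u^*)\,\mathrm{d}x$ are well defined, and $0\le u^*\le d$ throughout.

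Next I would bound $\|u^*\|^p=\int_\Omega a|\nabla u^*|^p\,\mathrm{d}x+\int_\Omega|u^*|^p\,\mathrm{d}x$ from both sides. The gradient is supported in the annulus, where $|\nabla u^*|=\tfrac{2d|x-x_0|}{r_2^2-r_1^2}$. For the lower bound, $|x-x_0|\ge r_1$ on $A_{r_1}^{r_2}$ gives $\int_\Omega a|\nabla u^*|^p\,\mathrm{d}x\ge \tfrac{2^p d^p r_1^p}{(r_2^2-r_1^2)^p}\|a\|_{L^1(A_{r_1}^{r_2})}$; multiplying by $k^p$ this is exactly $d^p\xi^p$ in the notation of \eqref{lbu}, so $k^p\|u^*\|^p\ge d^p\xi^p>c^p$ by the hypothesis $d^p\xi^p>c^p$. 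Hence $\phi(u^*)=\tfrac1p\|u^*\|^p>\tfrac1p(c/k)^p=r$, which is \eqref{bonanno1}. For the upper bound I would use $|x-x_0|\le r_2$ on the annulus together with $0\le u^*\le d$ and $|B(x_0,\varrho)|=w_N\varrho^N$ to control the gradient contribution, the $L^p$ mass on the annulus, and the $L^p$ mass on the core separately, arriving at the three-term estimate $k^p\|u^*\|^p\le d^p\eta^p$ encoded in \eqref{ubu}.

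Finally I would establish the lower bound on $\int_\Omega F(x,u^*)\,\mathrm{d}x$ that drives \eqref{bona1}. Splitting the integral over $B(x_0,r_1)$, the annulus, and the exterior, on the latter two regions one has $x\in\Omega\setminus B(x_0,r_1)$ and $u^*\in[0,d]$, so (F1) forces $F(x,u^*)\ge 0$ there; consequently $\int_\Omega F(x,u^*)\,\mathrm{d}x\ge \int_{B(x_0,r_1)}F(x,d)\,\mathrm{d}x$, since $u^*\equiv d$ on the core. Combining the upper norm estimate with (F2) then yields
\[
k^p\|u^*\|^p\,|\Omega|\max_{\bar\Omega\times[-c,c]}F \;\le\; d^p\eta^p\,|\Omega|\max_{\bar\Omega\times[-c,c]}F \;<\; c^p\int_{\Omega}F(x,d)\,\mathrm{d}x,
\]
and dividing by $k^p\|u^*\|^p$ rearranges to \eqref{bona1}. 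The step I expect to be the main obstacle is precisely this last one: it is where the sign condition (F1) is indispensable, since without it the truncation from $d$ to $0$ outside the core could destroy the positivity of $\int_\Omega F(x,u^*)\,\mathrm{d}x$, and it is where one must reconcile the core lower bound $\int_{B(x_0,r_1)}F(x,d)\,\mathrm{d}x$ with the global integral appearing in (F2). Once this comparison is pinned down, matching the crude norm bounds to the exact constants $\xi$ and $\eta$ is the routine but bookkeeping-heavy remainder.
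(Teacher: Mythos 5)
Your construction coincides with the paper's proof in every structural respect: the same plateau function $u^{\ast}$, the same threshold $r=\frac{1}{p}(c/k)^p$, the same two-sided estimate $\xi^p d^p < k^p\|u^{\ast}\|^p < \eta^p d^p$ of \eqref{bonaineq} obtained from \eqref{lbu}, \eqref{ubu} and the explicit computation of $\|u^{\ast}\|^p$, and the same use of (F1) to discard the contribution of $\Omega\setminus B(x_0,r_1)$. Up to and including \eqref{bonanno1} your argument is sound (the strictness of $k^p\|u^{\ast}\|^p>\xi^p d^p$, which you wrote as $\geq$, is harmless since $d^p\xi^p>c^p$ is strict).

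The genuine gap is exactly the step you flagged as the main obstacle, and your resolution of it does not work. Dividing your final display by $k^p\|u^{\ast}\|^p$ gives $|\Omega|\max_{\bar\Omega\times[-c,c]}F < \big(\frac{c}{k\|u^{\ast}\|}\big)^p\int_\Omega F(x,d)\,\mathrm{d}x$, which is \emph{not} \eqref{bona1}: the right-hand side of \eqref{bona1} carries $F(x,u^{\ast})$, and your bridge from (F1) points the wrong way. Indeed (F1) yields both $\int_\Omega F(x,u^{\ast})\,\mathrm{d}x\geq\int_{B(x_0,r_1)}F(x,d)\,\mathrm{d}x$ and $\int_\Omega F(x,d)\,\mathrm{d}x\geq\int_{B(x_0,r_1)}F(x,d)\,\mathrm{d}x$, so both quantities dominate the core integral and neither controls the other; if $F(\cdot,d)$ is large and positive off $B(x_0,r_1)$ while $F$ is small on $\Omega\times[-c,c]$ and on the core (e.g.\ $F(x,t)=\psi(x)\max(t-c,0)^2+\varepsilon\,\chi(x)t^2/c^2$ with $\psi\geq0$ supported off $B(x_0,r_2)$ and $\chi$ a sharp bump in $B(x_0,r_1)$), then (F2) as stated holds but your chain does not deliver \eqref{bona1}. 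The chain closes only if (F2) is invoked in the stronger ``ball'' form $d^p\eta^p|\Omega|\sup F < c^p\int_{B(x_0,r_1)}F(x,d)\,\mathrm{d}x$, after which one argues, using the strict upper bound $k\|u^{\ast}\|<\eta d$ from \eqref{bonaineq},
\begin{align*}
|\Omega|\max_{\bar\Omega\times[-c,c]}F &< \bigg(\frac{c}{\eta d}\bigg)^p\int_{B(x_0,r_1)}F(x,d)\,\mathrm{d}x = \bigg(\frac{c}{\eta d}\bigg)^p\int_{B(x_0,r_1)}F(x,u^{\ast})\,\mathrm{d}x\\
&< \bigg(\frac{c}{k\|u^{\ast}\|}\bigg)^p\int_{B(x_0,r_1)}F(x,u^{\ast})\,\mathrm{d}x \leq \bigg(\frac{c}{k\|u^{\ast}\|}\bigg)^p\int_{\Omega}F(x,u^{\ast})\,\mathrm{d}x,
\end{align*}
the last inequality by (F1). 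You should be aware that the paper's own proof commits the mirror image of your leap: its first displayed inequality silently restricts the integral in (F2) from $\Omega$ to $B(x_0,r_1)$, a restriction that (F1) does not license since $F(\cdot,d)\geq0$ off the ball makes the integral shrink. So your blind attempt reproduces the paper's argument, gap included; in both cases the repair is to state (F2) with $\int_{B(x_0,r_1)}F(x,d)\,\mathrm{d}x$ (as in the Bonanno--Livrea model result the paper follows) or to add a hypothesis forcing $\int_{\Omega\setminus B(x_0,r_1)}F(x,d)\,\mathrm{d}x\leq0$.
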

\begin{proof}
	Define 
	\begin{align*} 
	u^{\ast}(x) = \begin{cases}
	d, \hspace{3.76cm}x \in B(x_0,r_1),\\
	\frac{d}{(r_2^2 -r_1^2)} (r_2^2-|x-x_0|^2),  \ x \in B(x_0,r_2) \setminus B(x_0,r_1),  \\
	0, \hspace{3.765cm} x \in \Omega \setminus B(x_0,r_2).
	\end{cases}
	\end{align*}
	It is easy to check that $u^{\ast} \in W^{1,p}_0(a;\Omega)$. Note that, 
	\begin{align}\|u^{\ast}\|^p = \frac{2^p d^p}{(r_2^2 -r_1^2)^p} \int_{A_{r_1}^{r_2}} a(x) |x-x_0|^p \mathrm{d} x &+ \frac{w_Nd^p}{(r_2^2 -r_1^2)^p} \int_{r_1}^{r_2} (r_2^2-r^2)^p r^{N-1} \mathrm{d} r \notag\\ &+ d^pw_N r_1^N.   \label{uStar}
	\end{align}
	From \eqref{lbu}, \eqref{ubu} and \eqref{uStar}, we deduce 
	\begin{eqnarray}\label{bonaineq}
	\frac{\xi^p d^p}{k^p} < \|u^{\ast}\|^p <\frac{\eta^p d^p}{k^p}.
	\end{eqnarray} Now by using the fact that $d^p \xi^p >c^p$ and \eqref{bonaineq}, we get 
	\begin{align} \label{bon1}
	\frac{1}{p} \|u^{\ast}\|^p > \frac{1}{p} \frac{\xi^p d^p}{k^p} > \frac{1}{p} \frac{c^p}{k^p}.
	\end{align} By choosing $r:= \displaystyle \frac{1}{p} \bigg(\frac{c}{k}\bigg) ^p$, \eqref{bonanno1} follows from \eqref{bon1} immediately. 
	Since $0 \leq u^{\ast} \leq d$ for each $x \in \Omega$, the condition (F1) suggests
	\begin{equation}\label{F1}
	\int_{\Omega \setminus B(x_0,r_2)} F(x,u^{\ast}(x)) \mathrm{d} x + \int_{B(x_0,r_2) \setminus B(x_0,r_1)} F(x,u^{\ast}(x)) \mathrm{d} x \geq 0.
	\end{equation}
	Now by using condition (F2), \eqref{F1} and the definition of $u^{\ast}$, we get
	\begin{align*}
	|\Omega| \max_{(x,t) \in \bar{\Omega} \times [-c,c]} F(x,t) &< \bigg(\frac{c}{\eta d}\bigg)^p \int_{B(x_0,r_1)} F(x,d) \mathrm{d} x \\
	&= \bigg(\frac{c}{\eta d}\bigg)^p \int_{B(x_0,r_1)} F(x,u^{\ast}) \mathrm{d} x\\
	&< \bigg(\frac{c}{k\|u^{\ast}\|}\bigg)^p \int_{B(x_0,r_1)} F(x,u^{\ast}) \mathrm{d} x \\
	&\leq  \bigg(\frac{c}{k\|u^{\ast}\|}\bigg)^p \int_{\Omega} F(x,u^{\ast}) \mathrm{d} x, 
	\end{align*}
	i.e., \eqref{bona1} follows. The proof is complete.
\end{proof} 
Now we give a proof of our main theorem of this article. 
\begin{proof}[Proof of Theorem~\ref{result}]
	First we note down following observations which are of immediate consequences: 
	\begin{itemize}
		\item[(i)] $\Phi$ belongs to $C^1$ and also $\Phi'$ is compact.
		\item[(ii)] $\phi$ is weakly lower semicontinuous (since it's a norm) and bounded on each bounded subset of $W^{1,p}_0(a;\Omega)$. 
		\item[(iii)] $(\phi')^{-1}$ exists and is continuous too, thanks to Lemma~\ref{inverse}.
		\item[(iv)] From the assumptions on $g$, it follows that $\Upsilon$ is continuously G\^ateaux differentiable on $W^{1,p}_0(a;\Omega)$, with compact derivative. 
	\end{itemize}
	Thanks to (H3), for each $\lambda \geq 0$, we have 
	\begin{equation*} \lim_{\|u\| \to \infty} (\phi(u) +\lambda \Phi (u)) = +\infty. \end{equation*}
	Put $r= \displaystyle \frac{1}{p} \bigg(\frac{c}{k}\bigg)^p.$ Note that $\displaystyle\max_{\bar{\Omega}} |u(x)| \leq k \|u\|, $ for every $u \in W^{1,p}_0(a;\Omega)$. Hence for each $u$ such that 
	\[\phi(u) =\frac{1}{p}\|u\|^p \leq r \] and one has $$\max_{\bar{\Omega}}|u(x)| \leq k \|u\| =c.$$ 
	Thanks to Lemma~\ref{ustar}, there exists $u^{\ast} \in W^{1,p}_0(a;\Omega)$ such that 
	\[\phi(u^{\ast})= \frac{1}{p}\|u^{\ast}\|^p >r >0=\phi(0).\] 
	Therefore using \eqref{k-bound} and \eqref{bona1}, we get
	\begin{align}
	-\inf_{u \in \phi^{-1}((-\infty,r])} \Phi(u) &= \sup_{u \in \phi^{-1}((-\infty,r])} (-\Phi(u)) \leq \sup_{\{u: \|u\|^p \leq pr\}} \int_{\Omega} F(x,u) \mathrm{d} x \notag\\
	&<\int_{\Omega}\sup_{ |t| \leq c }F(x,t) \mathrm{d} x  < |\Omega| \max_{\bar{\Omega} \times [-c,c]} F(x,t) \notag \\
	&<\bigg(\frac{c}{k\|u^{\ast}\|}\bigg)^p \int_{\Omega} F(x,u^{\ast}) \mathrm{d} x  \notag\\
	&= \frac{1}{p} \bigg(\frac{c}{k}\bigg)^p \frac{p}{\|u^{\ast}\|^p} \int_{\Omega} F(x,u^{\ast}) \mathrm{d} x \notag\\
	&= r \frac{p}{\|u^{\ast}\|^p}  \int_{\Omega} F(x,u^{\ast}) \mathrm{d} x  = \frac{r(-\Phi(u^{\ast}))}{\phi(u^{\ast})}.  \label{finalineq}
	\end{align}
	From \eqref{finalineq}, one obtains 
	\[\sup_{u \in \phi^{-1}((-\infty,r])} (-\Phi(u)) <\frac{r(-\Phi(u^{\ast}))}{\phi(u^{\ast})}, \] or 
	\begin{equation}\label{bonanno2}
	\inf_{u \in \phi^{-1}((-\infty,r])} \Phi(u) > \frac{r\Phi(u^{\ast})}{\phi(u^{\ast})}.
	\end{equation}
 	Now we choose $u_0=0$ and  $u_1=u^{\ast}$, so that $\Phi(u_0)=0=\phi(u_0)$ and from \eqref{bonanno2}, we get 
	\[\inf_{u \in \phi^{-1}((-\infty,r])}\Phi(u) > \frac{(\phi(u_1)-r)\Phi(u_0) +(r-\phi(u_0))\Phi(u_1)}{\phi(u_1)-\phi(u_0)}.\]
	Hence all the conditions of Theorem \ref{threecritical} are satisfied and the existence of three nontrivial distinct solutions follow immediately. 
\end{proof}
\begin{remark}
	We note that the method is still applicable for other boundary conditions as well. For example, we can consider the same problem \eqref{main} with Neumann boundary condition, and look for the solution in the space $W^{1,p}(a;\Omega)$. 
\end{remark}
\begin{remark}
	Remark \ref{eqnorm} hints that we can also consider the following boundary value problem and discuss about the existence of at least three solutions in $W^{1,p}_0(a;\Omega)$ for the following Dirichlet boundary value problem: 
	\begin{align}
	\begin{cases}
	-\mathrm{div}(a(x)|\nabla u|^{p-2}\nabla u)   = \lambda f(x,u) +\mu g(x,u) \ \ \text{ in } \Omega,\\
	u=0 \text{ on } \partial \Omega,
	\end{cases}
	\end{align}
	where $\Omega \subset \mathbb{R}^N$ is a bounded domain.
\end{remark}
\section*{Acknowledgment} The first author acknowledges the support of the CSIR fellowship for his Ph.D. work. A part of the research was conducted while the second author was in University of West Bohemia, Pilsen, Czech Republic. The second author was supported the Grant Agency of the Czech Republic,  project no. 18-03253S and also by the DST-INSPIRE Grant DST/INSPIRE/04/2018/002208

\end{document}